\newcommand{\R}{\mathbb{R}}
\newtheorem{theorem}{Theorem}
\newtheorem{corollary}{Corollary}[theorem]
\newtheorem{lemma}{Lemma}
\newtheorem{definition}{Definition}[section]
\title{Multiflows: A New Technique for Filippov Systems and Differential Inclusions}
\author{Cameron Thieme}
\date{\today}
\begin{document}
\sloppy
\maketitle

\begin{abstract}
    The \textit{flow} is very useful in studying dynamical systems.  However, many modern systems--notably differential inclusions--do not have unique solutions, and therefore cannot be described by flows.  Richard McGehee has proposed an object, the \textit{multiflow}, in order to use the topological techniques developed for flows in this setting.  In this paper we will introduce multiflows and prove that under basic conditions, differential inclusions give rise to multiflows.  This paper will also outline the most prominent example of differential inclusions, Filippov systems, as motivation.  In addition, several  results on differential inclusions, such as the existence theorem, are reproven here in order to create a self-contained work.  
\end{abstract}

\section{Introduction} $\,$\par

In dynamical systems we often consider differential equations which take the form
$$\dot{x} = f (x)$$ where $x$ is a point on an open set $U \subset \R^n$ and $f : U \mapsto \R^n$ is a smooth function.  The meaning of smooth varies in literature, but generally $f$ is assumed at least to be
Lipschitz continuous in order to guarantee a unique solution to the differential equation. By a
solution of this equation we mean any differentiable function $x(t)$ such that
$$\frac{d}{dt}(x(t)) = f (x(t))$$ for all $t$ in some open interval $I$. Since we are considering autonomous systems, we assume that $I$ contains the time $t = 0$.\par 
For the equation described above, there is actually a whole family of solutions, one for each initial condition. In order to consider the entire family of solutions as a single object we study \textit{flows}. A flow is a continuous map $\varphi : \R \times \R^n \mapsto \R^n$ such that
\begin{enumerate}
\item $\varphi(0, x_0 ) = x_0$
\item $\varphi(s, \varphi(t, x_0 )) = \varphi(s + t, x_0 )$
\end{enumerate}\par 
This concept relates to the differential equation by letting $\varphi(t, x_0 )$ be the solution $x(t)$
with the initial condition $x(0) = x_0$.\par 
As long as $f$ is Lipschitz continuous, the equation $\dot{x} = f (x)$ generates a unique local
flow $\varphi : U \subset \R \times \R^n \mapsto \R^n$ . Such equations and flows have been extensively studied for
over a century.\par 
However, differential equations where $f$ is not Lipschitz continuous have received much less study. The reasons for this omission are essentially twofold. Firstly, from a
mathematical point of view, the Lipschitz assumption is extremely valuable. Without it, solutions are not guaranteed to be unique up to the initial condition, and hence
we lose determinism (and therefore the concept of a flow). Second, this mathematical simplification was historically justified by the applications that scientists studied.
Most vector fields of interest were Lipschitz continuous, and hence the more complicated study of equations lacking this smoothness seemed unnecessary.\par 
However, this second point is becoming less true in the modern world. There are now many models where the underlying differential equations are not Lipschitz continuous, or even continuous.  This set includes models of friction, where an object can reach a restpoint in finite time, and models involving mechanical switching, where a solution evolves according to one vectorfield till it reaches a certain point and then switches to another \cite{bernardo}. Low dimensional climate models also frequently exhibit non-smooth behaviour \cite{welander}.\par 
Thus it has become important to begin to study differential equations with discontinuous right-hand sides. This new area of study presents many unique challenges. Notably, the current definition of a solution does not work for these situations; in fact, even the definition of a differential equation must be altered. These considerations have led to the formulation of \textit{differential inclusions}.

\begin{definition} A \textbf{differential inclusion} is a generalization of the concept of a differential equation. It takes the form
$$\dot{x} \in F (x)$$
where $F$ is a set-valued map\footnote{For clarity, all set valued functions in this paper will be capitalized, like $F (x)$, while single valued maps will be lower-case, like $f (x)$
}.\par 
A \textbf{solution of the differential inclusion} is an absolutely continuous function $x(t)$ defined on some interval $I \in \R$ such that
$$\frac{d}{dt} x(t) \in F (x(t))$$ almost everywhere in $I$.
\end{definition}\par 
On a compact interval $[a,b]$, an absolutely continuous function $x(t)$ may be written as 
$$x(t)= x(a)+\int_a^t\dot{x}(s)ds $$
where the derivative $\dot{x}$ is Lebesgue integrable and exists almost everywhere.  The motivation for considering these almost everywhere differentiable functions as the solutions to differential inclusions, rather than $C^1$ functions as we do for differential equations, will become more clear when we consider one of the most prominent examples of differential inclusions, Filippov systems.\par 
These differential inclusions have been studied in recent years, most notably by A.F. Filippov \cite{filippov}. However, to date, no suitable generalization of the concept of a flow has been found for these systems.  Richard McGehee has defined an object, called a \textit{multiflow}, which remedies that situation; that is, the multiflow is a generalization of a flow suitable to a broad class of differential inclusions.\par 
This paper is split into three main sections.  In the first section, we introduce the basic conditions on a set-valued map $F$ of a differential inclusion $\dot{x}\in F(x)$ and explain how differential equations with piecewise-continuous righthand sides can be reformulated as such an inclusion.  This section also references some common scientific applications of these systems.  In the following section we examine some elementary results on differential inclusions, like solution existence.  The theorems in this section are all found in Filippov's book \cite{filippov}, and are presented here in order to create a self-contained exposition.  In the final section we define the concept of a multiflow and prove the main result, that differential inclusions give rise to multiflows.

\section{Differential Inclusions}

\subsection{Upper Semicontinuous Differential Inclusions}

In order to analyze differential inclusions $\dot{x}\in F(x)$, we must first put some conditions on the set-valued map $F$.  These conditions are extremely general, and most differential inclusions of scientific interest will meet the requirements.  The most notable condition is upper-semicontinuity of the correspondence $F$.

\begin{definition}
Let $X$ and $Y$ be topological spaces.  A set-valued function $F:X\to Y$ is said to be \textbf{upper semicontinuous at the point $x$} if for any neighbourhood $V$ of $F(x)$, there exists some neighbourhood $U$ of $x$ such that  $F(U)\subset V$.\par 
Then $F$ is said to be \textbf{upper semicontinuous} if it is upper semicontinuous at each $x\in X$.
\end{definition}

This definition of an upper semicontinuous set-valued function is the most general one, and applies to set-valued functions between any topological spaces.  But in this paper we will only consider set-valued functions in $\R^n$, and so it is often more convenient for us to consider $\epsilon$ and $\delta$ neighbourhoods rather than arbitrary neighbourhoods.  We can then rewrite the definition of upper semicontinuity in these terms:

\begin{definition}
A set-valued function $F:G\subset\R^n\to\R^n$ is said to be \textbf{upper semicontinuous at the point $x$} if for any $\epsilon>0$ there exists some $\delta>0$ such that $F(B_\delta(x))$ is a subset of an open $\epsilon$-neighbourhood of $F(x)$.\par 
The correspondence $F$ is said to be \textbf{upper semicontinuous} if it is upper semicontinuous at each $x\in G$.
\end{definition}

The above definition is stated in terms of open $\epsilon$ and $\delta$ neighbourhoods.  However, throughout this paper we will work almost exclusively with closed sets.  Therefore, in order to bring this definition in line with the rest of the work we will use closed $\epsilon$ and $\delta$ neighbourhoods instead of open ones.  It is a simple exercise to show that the definition is equivalent whether we use open or closed neighbourhoods.  The closed $\delta$-neighborhood of a set $S$ is denoted $\overline{N_\delta(S)}$.\par 


We now give the final, most succinct definition of an upper semicontinuous set-valued function:

\begin{definition}
A set-valued function $F:G\subset\R^n\to\R^n$ is said to be \textbf{upper semicontinuous at the point $x$} if for any $\epsilon>0$ there exists some $\delta>0$ such that $F(\overline{N_\delta(x)})\subset \overline{N_\epsilon(F(x))}$.\par 
The correspondence $F$ is said to be \textbf{upper semicontinuous} if it is upper semicontinuous at each $x\in G$.
\end{definition}

In addition to upper semicontinuity, Filippov also introduces what he calls the \textit{basic conditions} for a set-valued $F(x)$ on a domain $G$.  These conditions are necessary in order to get an existence result for differential inclusions, and so we will also assume these same conditions in this paper.

\begin{definition}  Let the set-valued function $F:G\subset\R^n\to\R^n$ be upper semicontinuous in $x$.  Also, for all $x_0\in G$, assume that the set $F(x_0)$ is
\begin{itemize}
 \item non-empty
 \item bounded
 \item closed
 \item convex  
 \end{itemize}
 Then $F$ is said to satisfy the \textbf{basic conditions}.
 \end{definition}\par 

These conditions are very general, applying to a wide variety of dynamical systems.  Note that any continuous single-valued function $f(x)$ trivially satisfies these conditions, and so any results about differential inclusions $\dot{x}\in F(x)$ also apply to more typical differential equations $\dot{x}=f(x)$.

\subsection{Piecewise-Continuous Differential Equations}$\,$\par
The study of upper semicontinous differential inclusions is motivated primarily by differential equations with nonsmooth righthand side.  The meaning of nonsmooth varies in the literature, and a good deal of study has been devoted to differential equations with varying degrees of differentiability \cite{bernardo}.  Here, however, we will examine the setting of piecewise continuous differential equations, and all results will apply to the broad class of systems which may have discontinuities in the underlying vector fields.  In this section, we will rigorously define these systems on an open domain $G$; the definitions and formulations presented here draw heavily from the work of A.F. Filippov \cite{filippov}.\par 
Without loss of generality, we may assume that $G$ is connected because disconnected portions may simply be examined separately.  The set $G$ is divided into open, disjoint regions $G_i$, along with their boundary points.  We will assume that the set of all boundary points of all $G_i$ is measure zero in $G$, and denote it by $\Sigma$.  We will refer to $\Sigma$ as the \textit{splitting boundary}.  For analytical reasons, we also impose the additional condition that any compact subset of $G$ contains only finitely many $G_i$; this assumption is very useful mathematically, and does not impose a burden from a modelling standpoint.  For the rest of this paper, we will call any connected, open domains partitioned in this way \textit{Filippov domains}.  This language is not standard in the literature, but it is useful to reference for our purposes.  \par 
We now consider a set of differential equations defined on the Filippov domain:
$$\dot{x} = f_i (x),\hspace{1cm} x \in G_i \subset G$$\par
Each $f_i$ is required only to be continuous, framing the system as a piecewise continuous one. 
We also assume that $f_i$ are continuous up to the boundary of $G_i$ so that $f_i (x)$ evaluates to a finite vector for each $i$ and for all $x \in \Sigma$.  In other words, each $f_i$ is defined and continuous on the closure of $G_i$.\par  
Of course, as written, this system is incomplete; there is no information about the vectorfield along $\Sigma$, and so there is no way to continue a solution which reaches the boundary of any $G_i$.  This issue brings us to the concepts of differential inclusion and the Filippov convex combination method.\par 
At each $x\in\Sigma$, there are multiple vector fields $f_i(x)$ that are defined. Because of that fact, it makes sense to introduce a differential inclusion $\dot{x}\in F(x)$.  In the Filippov convex combination method we define our set-valued vector field $F (x)$ to be the single-valued functions $f_i(x)$ for all $x$ in any of the open regions $G_i$.  For $x\in\Sigma$, however, we take $F(x)$ to be the set-valued convex hull of all vectors $f_i(x)$ such that $x$ is a boundary point of $G_i$. We will collect all of this information in the following definition.\par 
\begin{definition}
Let $G$ be a Filippov domain such that each $G_i$ is associated with a function $f_i$ that is continuous in the closure of $G_i$.
Define a set-valued function $F$ in the following way.\\
For $x\in G_i$, let $$F(x)=\{f_i(x)\}$$ 
For $x\in\Sigma$, let $F(x)$ be the convex hull of all vectors $f_i(x)$ such that $x$ is a boundary point of $G_i$.\\  
Then the differential inclusion $\dot{x}\in F(x)$ is a \textbf{Filippov System}.
\end{definition}
Note that any differential equation $\dot{x}=f(x)$ where $f$ is continuous will trivially fit into this framework.  Classical systems, with Lipschitz continuous differential equations, may then be viewed as special cases of Filippov systems.\par 
Here it is worth returning to the definition of a solution to a differential inclusion, as these Filippov systems show us why we want to only demand that our solutions be differentiable almost everywhere.  When a solution approaches the splitting boundary $\Sigma$, its derivative limits to a certain value.  However, on the other side of that boundary, the solution need not continue at the same velocity since the defining vectorfields $f_i$ and $f_j$ need not have any relationship to one another.  Hence we typically expect a loss of differentiability when solutions reach $\Sigma$. \par
Now that we have defined a Filippov system $\dot{x}\in F(x)$, we should check that it does, in fact, meet the basic conditions.
\begin{lemma} 
The set valued function $F$ defined by a Filippov system is upper semicontinuous.
\end{lemma}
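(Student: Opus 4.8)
The plan is to verify the closed-neighbourhood form of upper semicontinuity (the last definition above) directly at each point $x\in G$, splitting into two cases according to whether $x$ lies in some open region $G_i$ or on the splitting boundary $\Sigma$. The case $x\in G_i$ is immediate: since $G_i$ is open there is $\delta_0>0$ with $\overline{N_{\delta_0}(x)}\subset G_i$, so $F(y)=\{f_i(y)\}$ for every nearby $y$ and $F(x)=\{f_i(x)\}$. As $f_i$ is continuous at $x$, for any $\epsilon>0$ continuity supplies $\delta\le\delta_0$ with $f_i(\overline{N_\delta(x)})\subset\overline{N_\epsilon(f_i(x))}$, which is exactly the required inclusion $F(\overline{N_\delta(x)})\subset\overline{N_\epsilon(F(x))}$.

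The substantive case is $x\in\Sigma$, and here I would first use the finiteness hypothesis to pin down which vector fields matter. Let $I(x)$ be the set of indices $i$ with $x\in\overline{G_i}$; because any compact subset of $G$ meets only finitely many $G_i$, the set $I(x)$ is finite. I can then choose $\delta_0>0$ so small that the compact ball $\overline{N_{\delta_0}(x)}$ meets $G_i$ only for $i\in I(x)$: each of the finitely many regions that meets a fixed compact neighbourhood of $x$ but does not contain $x$ in its closure is bounded away from $x$, so shrinking $\delta_0$ excludes all of them. A short accumulation argument then shows that for any $y\in\overline{N_\delta(x)}$ with $\delta<\delta_0$ and any $k$ with $y\in\overline{G_k}$, one necessarily has $k\in I(x)$, since points of $G_k$ accumulate at $y$ and hence lie inside $\overline{N_{\delta_0}(x)}$. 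Consequently every vector used to form $F(y)$ has the shape $f_k(y)$ with $k\in I(x)$.

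Finally I would combine continuity with convexity. For the given $\epsilon>0$, continuity of each $f_k$ on $\overline{G_k}$ (with $x\in\overline{G_k}$) lets me take $\delta\le\delta_0$ small enough that $|f_k(y)-f_k(x)|\le\epsilon$ for every $k\in I(x)$ and every $y\in\overline{G_k}\cap\overline{N_\delta(x)}$; the minimum is over a finite index set, so $\delta>0$. Each such $f_k(y)$ then lies within $\epsilon$ of $f_k(x)\in F(x)$, so every generator of $F(y)$ belongs to $\overline{N_\epsilon(F(x))}$. The key point that closes the argument is that $\overline{N_\epsilon(F(x))}$ is itself convex, being the $\epsilon$-sublevel set of the distance function to the convex set $F(x)$, which is a convex function; hence $\overline{N_\epsilon(F(x))}$ contains the convex hull $F(y)$ of those generators. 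Taking the union over all $y\in\overline{N_\delta(x)}$ yields $F(\overline{N_\delta(x)})\subset\overline{N_\epsilon(F(x))}$.

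I expect the main obstacle to be the bookkeeping in the boundary case: correctly isolating the finite index set $I(x)$ from the finiteness hypothesis and verifying that no foreign region $G_k$ contributes a generator to $F(y)$ when $y$ is near $x$. The convexity observation is the clean idea that makes the finitely many continuous pieces assemble into a single upper semicontinuous set-valued map, and everything else is routine $\epsilon$--$\delta$ estimation.
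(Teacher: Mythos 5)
Your proof is correct, and its overall architecture matches the paper's: split into the trivial interior case and the boundary case, use continuity of each $f_i$ up to $\overline{G_i}$, and take a minimum of finitely many $\delta$'s. Two points differ. First, you explicitly justify, via the finiteness hypothesis and an accumulation argument, that for $y$ near $x$ every generator of $F(y)$ comes from an index $k$ with $x\in\overline{G_k}$; the paper's proof uses this localization implicitly (it simply asserts that $F(y)$ is a convex hull of vectors $f_i(y)$ with $f_i(y)\in\overline{N_\epsilon(f_i(x))}$), so your version closes a small gap in rigor. Second, the final step is genuinely different: the paper shows $F(y)\subset\overline{N_\epsilon(F(x))}$ by pairing an arbitrary convex combination $\sum\alpha_i f_i(y)$ with the matching combination $\sum\alpha_i f_i(x)\in F(x)$ and estimating the difference termwise, whereas you observe that $\overline{N_\epsilon(F(x))}$ is itself convex (as a sublevel set of the distance function to the convex set $F(x)$) and therefore contains the convex hull of the generators. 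The paper's pairing argument is more elementary and self-contained; your convexity observation is slicker and reusable (it shows quite generally that a closed $\epsilon$-neighbourhood of a convex set absorbs convex hulls), at the cost of invoking convexity of the distance function. Both routes are valid.
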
 

\begin{proof}
It is clear that $F$ is upper-semicontinuous at $x\in G_i$ for any $i$ since $F(x)$ is defined by the single-valued continuous function $f_i(x)$ at such points.  Thus, it remains only to show that $F$ is upper-semicontinuous at $x\in\Sigma$.\par Fixing such an $x$, the set $F(x)$ is the convex hull of a finite number of vectors $\{f_i(x)\}_{i=1}^p$.  Consider an arbitrary closed $\epsilon$-neighbourhood of $F(x)$, $\overline{N_\epsilon(F(x))}$.  For each $i$, there is some $\delta_i$ such that $y\in \overline{G_i}$ and $|x-y|\leq\delta_i$ implies that $|f_i(x)-f_i(y)|\leq\epsilon$, which we can also write as $f_i(y)\in \overline{N_\epsilon(f_i(x))}$.  Let $\delta=\min_{1\leq i\leq p}\{\delta_i\}$.  Then for $y\in \overline{N_\delta(x)}$, $F(y)$ is either a single valued function $f_i(y)$ (for $y\in G_i$) or the the convex hull of a finite set of vectors $\{f_i(y)\}_{i=1}^q$ (for $y\in\Sigma$) all satisfying the relationship $f_i(y)\in \overline{N_\epsilon(f_i(x))}\subset \overline{N_\epsilon(F(x))}$.   \par 
This fact implies that for $y\in \overline{N_\delta(x)}$, $F(y)\subset \overline{N_\epsilon(F(x))}$; to verify this statement, consider an arbitrary vector $f_y\in F(y)$.  We can write this vector as $f_y=\sum_{i=1}^q \alpha_i f_i(y)$, where $1\leq q\leq p$ and $\sum_{i=1}^q \alpha_i=1$.  Now consider the vector $f_x:=\sum_{i=1}^q \alpha_i f_i(x)\in F(x)$.  
\begin{align*}
    |f_x-f_y|&=|\sum_{i=1}^q \alpha_if_i(x)-\sum_{i=1}^q\alpha_if_i(y)|\\
    &= |\sum_{i=1}^q \alpha_i(f_i(x)-f_i(y))|\\
    &\leq \sum_{i=1}^q\alpha_i|f_i(x)-f_i(y)|\\
    &\leq \sum_{i=1}^q\alpha_i\epsilon\\
    &=\epsilon
\end{align*}
Thus $F(\overline{N_\delta(x)})\subset \overline{N_\epsilon(F(x))}$, and so $F$ is upper-semicontinuous at any $x$.
\end{proof}
With this lemma, it is easy to see that Filippov systems satisfy the basic conditions of differential inclusions.  At each point, the correspondence $F$ is well-defined and non-empty.  Wherever the system is single valued, $F(x_0)$ is clearly compact and convex.  For $x_0$ where the system is not single valued, $F(x_0)$ is still clearly closed and convex by definition, and it is bounded because we assume that $f_i$ is defined on $\overline{G_i}$ for each $i$, and so $f_i(x_0)$ is always finite.  We will summarize this information into a theorem:
\begin{theorem}
The set valued function $F$ defined by a Filippov system satisfies the basic conditions.
\end{theorem}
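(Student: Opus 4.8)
The plan is to verify each component of the basic conditions in turn. Upper semicontinuity is already established by the preceding lemma, so it remains only to check that $F(x_0)$ is non-empty, bounded, closed, and convex for every $x_0 \in G$. For each of these four properties I would argue by splitting into two cases according to whether $x_0$ lies in one of the open regions $G_i$ or on the splitting boundary $\Sigma$.

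In the first case, where $x_0 \in G_i$ for some $i$, the set $F(x_0) = \{f_i(x_0)\}$ is a single point. Since $f_i$ is continuous on $\overline{G_i}$, the vector $f_i(x_0)$ is a well-defined finite element of $\R^n$, so the singleton is non-empty and bounded; and a singleton is trivially closed and convex in $\R^n$. Thus all four conditions hold immediately at such points.

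In the second case, where $x_0 \in \Sigma$, the set $F(x_0)$ is by definition the convex hull of the finite collection $\{f_i(x_0)\}_{i=1}^p$ of vectors corresponding to those $G_i$ having $x_0$ as a boundary point. Convexity is immediate from this definition, and non-emptiness follows because $x_0$ is a boundary point of at least one region, so the collection is nonempty. Boundedness follows since each $f_i$ is continuous on $\overline{G_i}$, so each $f_i(x_0)$ is finite and the convex hull of finitely many finite vectors is bounded.

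The one step deserving genuine care, and the point I expect to be the main obstacle, is the closedness of $F(x_0)$ on $\Sigma$, since the convex hull of an arbitrary set need not be closed. The key is that the collection $\{f_i(x_0)\}_{i=1}^p$ is finite, which is guaranteed by the Filippov domain assumption that every compact subset of $G$ meets only finitely many of the $G_i$. The convex hull of finitely many points in $\R^n$ is a compact polytope (it is the continuous image of the compact standard simplex), hence closed. Once this finiteness is invoked, closedness follows, completing the verification of all four conditions and, together with the lemma, establishing the theorem.
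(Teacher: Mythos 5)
Your proposal is correct and follows essentially the same route as the paper: upper semicontinuity is delegated to the preceding lemma, and the remaining four conditions are checked by splitting into the cases $x_0\in G_i$ and $x_0\in\Sigma$, with boundedness coming from the continuity of each $f_i$ on $\overline{G_i}$. The only difference is that you spell out the closedness of the convex hull on $\Sigma$ (a compact polytope generated by finitely many points, using the local-finiteness assumption on the $G_i$), a detail the paper simply asserts as clear.
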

\par 
In general it is possible to consider Filippov systems where $\Sigma$ is very complicated, and when theorems are quoted in this paper they will apply to these general systems defined above.  However, in most models, $\Sigma$ is simply a codimension-1 manifold.  There are a few notable common examples where the dimensionality of $\Sigma$ is not well defined everywhere--for instance, $\Sigma$ could be two intersecting lines--and so it would not be a manifold.  But usually, the set $\Sigma$ is a manifold.\par 
In fact, the majority of nonsmooth models in the literature have  only two distinct regions, which we will refer to as $G_-$ and $G_+$. When only two regions border $\Sigma$, as in this case, the convex hull may be written compactly as the convex combination of the two vectors $f_-(x)$ and $f_+(x)$:
$$F(x)=\{\alpha f_+(x) + (1-\alpha)f_-(x):\alpha\in[0,1]\}\hspace{1cm} x\in\Sigma$$
\begin{figure}[h]
  \centering
  \subfloat{\includegraphics[width=0.4\textwidth]{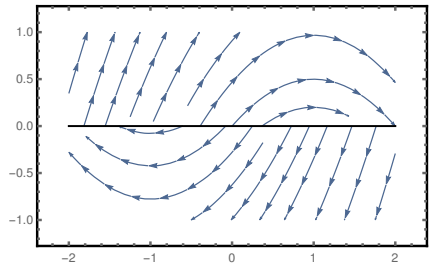}\label{fig:f1}}
  \caption{An example of a simple Filippov system in $\R^2$\cite{leifeldcode}.  The splitting boundary is the $x$-axis.  The region above the $x$-axis is $G_+$ and the region below it is $G_-$.}
\end{figure}\par 
Thus the entire Filippov system may be written as
$$\dot{x}\in F(x) =
\begin{cases}
f_- (x), & x \in G_- \\
f_+ (x), & x \in G_+ \\
\{\alpha f_+(x) + (1-\alpha)f_-(x):\alpha\in[0,1]\} & x\in\Sigma
\end{cases}$$\par 
We see that $\dot{x}$ is dependent on $\alpha$, and so solutions which reach the splitting boundary do not necessarily obey deterministic laws. This loss of determinism makes it impossible to expect Filippov systems to give rise to flows.  However, because flows are valuable tools in the study of dynamical systems we would still like some generalization of the concept; we hope that this generalization will be the obeject we call a multiflow.\par 

\begin{figure}[h]
  \centering
  \subfloat{\includegraphics[width=0.4\textwidth]{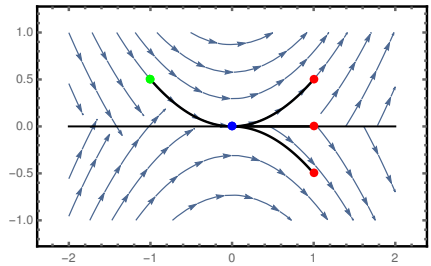}\label{fig:f1}}
  \caption{A simple Filippov System.  Solutions beginning at the green dot are no longer uniquely determined once they hit the splitting boundary at the blue dot, and may end up at any of the red dots (or infinite other locations).}
\end{figure}\par

Before introducing multiflows, however, we will present some basic results on upper semicontinuous differential inclusions.

\section{Theorems about Solutions to Differential Inclusions}$\,$\par 
Now that we have a few examples of upper semicontinuous differential inclusions, we would like to know more information about these systems and their solutions.  Fortunately, Filippov developed and compiled a good deal of the useful machinery in his seminal work \cite{filippov}.  We will restate and reprove some of the theorems that appear in that work so that this paper can be a self-contained introduction to the concept of multiflows.\par 
Probably the most important result that Filippov proves is that the basic conditions on a set-valued map $F$ guarantee the existence of solutions to the differential equation $\dot{x}\in F(x)$.  He also shows that these solutions behave something like solutions to standard differential equations.\par 
The first of these results is analogous to the fact that continuous functions are bounded on compact sets.

\begin{lemma}\label{bounded} \cite{filippov}
If $F$ satisfies the basic conditions in a closed, bounded domain $D$ then there is some $M\in\R$ such that $|F(x)|\leq M$ for all $x\in D$.
\end{lemma}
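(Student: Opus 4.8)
The plan is to mimic the classical proof that a continuous function is bounded on a compact set, replacing pointwise continuity with upper semicontinuity and using the boundedness clause of the basic conditions to seed a local bound at each point. Throughout I read $|F(x)|$ as $\sup_{v\in F(x)}|v|$, which is finite for every fixed $x$ precisely because the basic conditions force each $F(x)$ to be bounded.

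First I would fix a convenient tolerance, say $\epsilon=1$. For an arbitrary $x\in D$, boundedness of $F(x)$ supplies a finite number $M_x:=|F(x)|=\sup_{v\in F(x)}|v|$. Applying the closed-neighbourhood formulation of upper semicontinuity at $x$ then yields a $\delta_x>0$ with $F(\overline{N_{\delta_x}(x)})\subset\overline{N_1(F(x))}$. The point of this step is that any vector $w\in F(y)$ for $y\in\overline{N_{\delta_x}(x)}$ lies within distance $1$ of some $v\in F(x)$, so $|w|\le|v|+1\le M_x+1$; hence $|F(y)|\le M_x+1$ holds uniformly across the entire neighbourhood of $x$.

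Next I would invoke compactness. The open balls $B_{\delta_x}(x)$ ranging over all $x\in D$ form an open cover of $D$, so finitely many of them, centred at $x_1,\dots,x_k$, already cover $D$. Setting $M:=\max_{1\le j\le k}(M_{x_j}+1)$ completes the argument: every $y\in D$ falls into some $B_{\delta_{x_j}}(x_j)$, whence $|F(y)|\le M_{x_j}+1\le M$.

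I do not anticipate a genuine obstacle here; the only step requiring care is the bookkeeping in the local estimate, namely verifying that upper semicontinuity together with the boundedness of $F(x)$ combine to bound \emph{every} vector in $F(y)$ rather than merely the vectors of $F(x)$ itself. Once that local uniform bound is established, the passage from local to global is the standard finite-subcover maneuver.
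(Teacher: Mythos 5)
Your proof is correct, but it takes a different route from the paper's. The paper argues by contradiction using sequential compactness: assuming no uniform bound exists, it extracts a sequence $\{x_i\}$ with $|F(x_i)|\to\infty$, passes to a convergent subsequence $x_{i_j}\to x^*$, and then uses upper semicontinuity at the single point $x^*$ (together with the pointwise boundedness of $F(x^*)$) to trap $F(x_{i_j})$ inside $\overline{N_\epsilon(F(x^*))}$ for large $j$, contradicting the divergence. You instead give a direct argument: a local uniform bound $|F(y)|\le M_x+1$ on each neighbourhood $\overline{N_{\delta_x}(x)}$, followed by the finite-subcover maneuver and a maximum over the finitely many local bounds. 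Your local estimate is handled correctly --- the key point that $w\in\overline{N_1(F(x))}$ means $w$ is within distance $1$ of \emph{some} $v\in F(x)$, hence $|w|\le M_x+1$, is exactly the bookkeeping that needs checking, and you do it. The trade-off is minor: the direct covering proof is constructive in the sense that it exhibits the bound $M$ explicitly as a finite maximum and generalizes to any compact topological space, while the paper's contradiction argument is shorter to state and leans on the sequential characterization of compactness that is natural in $\R^n$. Either is acceptable here.
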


By $|F(x)|\leq M$, we mean that if $f\in F(x)$ then $|f|\leq M$.

\begin{proof}
If this result were not true then we could choose a sequence $\{x_i\}\in D$ such that $|F(x_i)|\to\infty$ monotonically as $i\to\infty$.  Since $D$ is compact, we can find a convergent subsequence $x_{i_j}\to x^*\in D$.  Since we have assumed that $F$ is bounded at each point, $|F(x^*)|<\infty$.  But by the upper semicontinuity of $F$, for any $\epsilon>0$ and correspondingly large $j$, $F(x_{i_j})\subset \overline{N_\epsilon(F(x^*))}$.  This inclusion contradicts the assumption that $|F(x_i)|\to\infty$ monotonically.
\end{proof}

The next few results build to the existence theorem for differential inclusions.  The proof of that theorem is a modification of the classic Cauchy-Peano existence proof for differential equations.  The Cauchy-Peano proof relies on the construction of approximate solutions to the differential equation $\dot{x}=f(x)$, and so it is therefore necessary to define an approximate solution to a differential inclusion $\dot{x}\in F(x)$.\par
Before defining these approximate solutions, we will introduce some notation: if $A$ is any subset of $\R^n$, then $co(A)$ denotes the smallest convex set containing $A$.

\begin{definition}
A \textbf{$\delta$-solution} of the differential inclusion $\dot{x}\in F(x)$ is an absolutely continuous function $y(t)$ that almost everywhere satisfies the differential inclusion 
$$\dot{y}(t)\in F_\delta(y(t))$$
where $F_\delta(y):= \overline{N_\delta(co(F(\overline{N_\delta(y)})))}$
\end{definition}

One of the key ideas of the Cauchy-Peano existence proof is that a sequence of increasingly accurate approximate solutions to the differential equation converges to an exact solution.  This step is very different in the case of differential inclusions, and so we will present the analogous result in the next two lemmas.  

\begin{lemma}\label{abscon} \cite{filippov} 
Let $\{x_k:[a,b]\to\R^n\}_{k=1}^\infty$ be a sequence of absolutely continuous functions that limit to a function $x(t)$, and assume that $\dot{x_k}(t)\in D$ almost everywhere, where $D\subset\R^n$ is a compact, convex set.  Then $x(t)$ is absolutely continuous and $\dot{x}(t)\in D$ wherever it is defined, namely, almost everywhere on $[a,b]$.  
\end{lemma}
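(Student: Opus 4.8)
The plan is to first exploit the hypothesis $\dot{x}_k(t)\in D$ almost everywhere together with the compactness of $D$ to extract a uniform Lipschitz bound on the whole sequence. Setting $M:=\sup_{d\in D}|d|$, which is finite because $D$ is compact, the integral representation of an absolutely continuous function gives, for $a\le s\le t\le b$,
$$|x_k(t)-x_k(s)|=\left|\int_s^t\dot{x}_k(\tau)\,d\tau\right|\le\int_s^t|\dot{x}_k(\tau)|\,d\tau\le M(t-s).$$
Thus every $x_k$ is $M$-Lipschitz with the same constant. Passing to the pointwise limit in this inequality shows that $x$ is itself $M$-Lipschitz; since every Lipschitz function is absolutely continuous, $x$ is absolutely continuous and $\dot{x}$ exists almost everywhere. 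This settles the first conclusion.

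For the second conclusion, the key is to confine the difference quotients of each $x_k$ to $D$. Fixing $s<t$, I would show
$$\frac{x_k(t)-x_k(s)}{t-s}=\frac{1}{t-s}\int_s^t\dot{x}_k(\tau)\,d\tau\in D.$$
This is the heart of the argument and the only place where both convexity and compactness of $D$ are genuinely used. I would justify it by a supporting-hyperplane argument: if this integral average lay outside the closed convex set $D$, the separating hyperplane theorem would produce a linear functional $\ell$ and a constant $c$ with $\ell(d)\le c$ for all $d\in D$, yet with $\ell$ evaluated at the average strictly exceeding $c$. Integrating the pointwise bound $\ell(\dot{x}_k(\tau))\le c$ over $[s,t]$ and dividing by $t-s$ shows that $\ell$ of the average is at most $c$, a contradiction.

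With the difference quotients of the $x_k$ all lying in $D$, I would pass to the limit twice, each time using only that $D$ is closed. Keeping $s<t$ fixed and letting $k\to\infty$, the pointwise convergence $x_k\to x$ gives $\frac{x(t)-x(s)}{t-s}\in D$. Then, at any $t$ where $\dot{x}(t)$ exists, letting $s\to t$ yields $\dot{x}(t)=\lim_{s\to t}\frac{x(t)-x(s)}{t-s}\in D$, again because $D$ is closed. Since $\dot{x}$ exists almost everywhere, this completes the proof.

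I expect the main obstacle to be precisely the integral-mean claim that $\frac{1}{t-s}\int_s^t\dot{x}_k\,d\tau\in D$; the surrounding Lipschitz estimate and the two limit passages only require $D$ to be compact and closed, whereas this single step is where convexity is indispensable and so must be argued carefully via separation.
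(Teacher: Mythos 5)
Your proposal is correct and follows the same overall skeleton as the paper's proof: a uniform Lipschitz bound from the boundedness of $D$ passed to the limit to get absolute continuity of $x$, then the claim that the difference quotients $\frac{1}{t-s}\int_s^t \dot{x}_k(\tau)\,d\tau$ lie in $D$, then two limit passages ($k\to\infty$ and $s\to t$) using closedness of $D$. The one place you genuinely diverge is the justification of the integral-mean claim, which you correctly identify as the heart of the lemma. The paper argues via Riemann sums: the integral sums $\sum \frac{\Delta_i\,\dot{x}_k(t_i)}{h}$ are convex combinations of points of $D$ and hence lie in $D$, and compactness lets one pass to the supremum/infimum. This rests on the assertion that the Riemann and Lebesgue integrals of $\dot{x}_k$ coincide because $x_k$ is absolutely continuous --- a claim that is delicate, since the derivative of an absolutely continuous function need only be Lebesgue integrable and the Riemann-sum presentation is not really available in general. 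Your separating-hyperplane argument avoids this entirely: from $\ell(\dot{x}_k(\tau))\le c$ almost everywhere you integrate (in the Lebesgue sense, which is all you need) and use linearity of $\ell$ to contradict strict separation. This is the cleaner and more robust version of the step, at the cost of invoking the Hahn--Banach separation theorem rather than elementary convex combinations; the two approaches buy the same conclusion, but yours does not require any tacit regularity of $\dot{x}_k$ beyond integrability.
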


\begin{proof}
Since $D$ is bounded, there is some $m>0$ such that $|\dot{x_k}(t)|\leq m$ for all $k$ and $t\in[a,b]$. Then for $t_1,t_2\in[a,b]$  we have:
\begin{align*}
    |x(t_1)-x(t_2)|&=\lim_{k\to\infty} |x_k(t_1)-x_k(t_2)|\\
    &=\lim_{k\to\infty} |\int_{t_2}^{t_1}\dot{x_k}(t)\,dt|\\
    &\leq \lim_{k\to\infty} \int_{t_2}^{t_1}m\,dt\\
    &= m|t_1-t_2|
\end{align*}
Thus $x$ is Lipschitz continuous, and hence absolutely continuous.\par 
To see that $\dot{x}(t)\in D$ wherever it is defined, arbitrarily fix $t\in(a,b)$ and take $h$ small enough that $[t-h,t+h]\subset(a,b)$.  We claim that 
$$q_k^h:=\frac{x_k(t+h)-x_k(t)}{h}=\int_t^{t+h}\frac{\dot{x_k}(t)}{h}\,dt\in D$$\par 
In order to prove this claim, we consider the Riemann definition of the above integral. Note that since the $x_k$ are absolutely continuous functions on the real line, the Riemann and Lebesgue definitions of the integral are equivalent, and so we consider the Riemann sum for simplicity.  The $q_k^h$ are supremums (or infimums) of integral sums of the form
$$\sum\frac{\Delta_i \dot{x_k}(t_i)}{h},\hspace{1cm} \sum\frac{\Delta_i }{h}=1$$\par 
This presentation shows that because of the averaging $\frac{1}{h}$ factor, the integral sums are convex combinations of points $\dot{x_k}(t_i)\in D$, and hence belong to the convex set $D$.  Since $D$ is compact, the supremum (or infimum) over the set of integral sums also belongs to $D$, and so the claim is verified.\par 
Since $D$ is compact, $$\lim_{k\to\infty}q_k^h=\frac{x(t+h)-x(t)}{h}\in D$$
Note that the above statement remains true for arbitrarily small $h$.  Then again using the compactness of $D$, this statement implies that $$\dot{x}(t)=\lim_{h\to 0}\frac{x(t+h)-x(t)}{h}=\lim_{h\to 0}q_k^h\in D$$ whenever that limit exists.  Since $x(t)$ is absolutely continuous, the limit must exist almost everywhere on the interval $(a,b)$.

\end{proof}

\begin{lemma} \label{unisol} \cite{filippov} 
Let $F(x)$ satisfy the basic conditions in a domain $G$ and let $\delta_k\to 0$ as $k\to\infty$.  Then the limit $x(t)$ of a uniformly convergent sequence $\{x_k:[a,b]\to G\}$ of $\delta_k$-solutions  to the differential inclusion $\dot{x}\in F(x)$ is a solution to that inclusion (as long as $x(t)\in G$).
\end{lemma}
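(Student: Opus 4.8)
The plan is to reduce the statement to an application of Lemma~\ref{abscon} by localizing in time. The obstruction to applying that lemma directly is that it requires a single fixed compact convex set $D$ containing all the derivatives $\dot{x}_k$, whereas here the natural target $F(x(t))$ varies with $t$. The remedy is upper semicontinuity: near any fixed time $t_0$, all the relevant derivative values can be trapped inside an arbitrarily small fixed neighbourhood of $F(x(t_0))$, after which we let that neighbourhood shrink.

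First I would establish a uniform bound on the derivatives. Since $x_k\to x$ uniformly and $x(t)\in G$, the image $x([a,b])$ is a compact subset of $G$, so for large $k$ the functions $x_k$ take values in a fixed compact set $K\subset G$. By Lemma~\ref{bounded}, $F$ is bounded on a compact neighbourhood of $K$, and since $\dot{x}_k(t)\in F_{\delta_k}(x_k(t))$ with $\delta_k\to 0$, the derivatives $\dot{x}_k$ are uniformly bounded. As in the proof of Lemma~\ref{abscon}, this makes $x$ Lipschitz, hence absolutely continuous, so $\dot{x}(t)$ exists almost everywhere.

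Next, fix a time $t_0$ at which $\dot{x}(t_0)$ exists, and fix $\epsilon>0$. The central estimate is that, for $y$ near $x(t_0)$ and $k$ large, $F_{\delta_k}(y)$ lies in a small neighbourhood of $F(x(t_0))$. By upper semicontinuity there is $\eta>0$ with $F(\overline{N_\eta(x(t_0))})\subset\overline{N_\epsilon(F(x(t_0)))}$. For $|y-x(t_0)|\le\eta/2$ and $\delta_k\le\eta/2$ we have $\overline{N_{\delta_k}(y)}\subset\overline{N_\eta(x(t_0))}$, hence $F(\overline{N_{\delta_k}(y)})\subset\overline{N_\epsilon(F(x(t_0)))}$. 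Since $F(x(t_0))$ is convex by the basic conditions, the set $\overline{N_\epsilon(F(x(t_0)))}$ is convex, so passing to the convex hull $co(\cdot)$ keeps us inside it, and the further $\delta_k$-fattening in the definition of $F_{\delta_k}$ only enlarges the neighbourhood slightly. Thus for $\delta_k\le\min(\eta/2,\epsilon)$ we obtain $F_{\delta_k}(y)\subset\overline{N_{2\epsilon}(F(x(t_0)))}=:D$, a compact convex set. I expect this paragraph to be the main obstacle: it requires carefully tracking the interplay between the $\delta_k$-neighbourhood of $y$, the convex-hull operation, and the outer $\delta_k$-fattening, all while both $\delta_k\to 0$ and (eventually) $\epsilon\to 0$, and it is where convexity of $F(x(t_0))$ is essential.

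Finally I would localize in time and invoke Lemma~\ref{abscon}. By continuity of $x$ there is $h>0$ with $|x(t)-x(t_0)|\le\eta/4$ on $I_0:=[t_0-h,t_0+h]$, and by uniform convergence $|x_k(t)-x(t_0)|\le\eta/2$ for all $t\in I_0$ and all large $k$. Hence $\dot{x}_k(t)\in F_{\delta_k}(x_k(t))\subset D$ almost everywhere on $I_0$ for large $k$. Applying Lemma~\ref{abscon} to the sequence $\{x_k\}$ restricted to $I_0$ with the fixed compact convex set $D$ yields $\dot{x}(t_0)\in D=\overline{N_{2\epsilon}(F(x(t_0)))}$. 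Since $\epsilon>0$ was arbitrary and $F(x(t_0))$ is closed, $\dot{x}(t_0)\in\bigcap_{\epsilon>0}\overline{N_{2\epsilon}(F(x(t_0)))}=F(x(t_0))$. As this holds for almost every $t_0$, the limit $x(t)$ is a solution of $\dot{x}\in F(x)$, completing the proof.
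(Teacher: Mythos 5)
Your proposal is correct and follows essentially the same route as the paper's proof: localize at a time $t_0$, use upper semicontinuity together with $\delta_k\to 0$ and uniform convergence to trap $\dot{x}_k(t)$ in the fixed compact convex set $\overline{N_{2\epsilon}(F(x(t_0)))}$, apply Lemma~\ref{abscon}, and then let $\epsilon\to 0$ using closedness and convexity of $F(x(t_0))$. The only (cosmetic) difference is that you establish absolute continuity of $x$ up front via a global derivative bound from Lemma~\ref{bounded}, whereas the paper extracts it locally from Lemma~\ref{abscon}.
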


\begin{proof}
Choose an arbitrary $t_0\in(a,b)$ and $\epsilon>0$.  We will show that in a neighbourhood of $t_0$, $\dot{x_k}(t)\in \overline{N_{2\epsilon}(F(x(t_0)))}$.  By lemma \ref{abscon}, this relationship implies that in that neighbourhood, $x(t)$ is absolutely continuous and $\dot{x}(t)\in \overline{N_{2\epsilon}(F(x(t_0)))}$ wherever that derivative exists (almost everywhere in the neighbourhood). Since our choices of $t_o,\epsilon$ were arbitrary, we see a few things.  First, we see that $x(t)$ is absolutely continuous on the whole of the interval $[a,b]$ since it is absolutely continuous in a neighbourhood of each $t_0$ and $[a,b]$ is compact.  Second, it shows that $\dot{x}(t_0)\in F(x(t_0))$ for all $t_0$ where $x$ is differentiable (again, almost everywhere on the interval) since the choice of $\epsilon$ was arbitrary and $t_0$ clearly belongs to any neighbourhood of itself. Thus, in order to prove this lemma, we only need to prove the claim.\par 
During this proof, bear in mind that we only consider $t\in[a,b]$, and so for $t_0=a$ or $t_0=b$ the neighbourhoods we will find are one-sided.  Now, let $x_0:=x(t_0)$.  By the upper-semicontinuity of $F$, there exists some $\eta>0$ such that $|y-x_0|<3\eta$ implies that $F(y)\subset \overline{N_\epsilon(F(x_0))}$.  Since $\delta_k\to 0$ and the $x_k(t)$ converge uniformly to $x(t)$, there is also some $k_0$ such that $k>k_0$ implies that $\delta_k<\min(\eta,\epsilon)$ and $|x_k(t)-x(t)|<\eta$ for all $t\in[a,b]$.  Additionally, by the continuity of $x(t)$ (clear since $x_k\to x$ uniformly), there is some $\gamma\in(0,\eta)$ such that $|t-t_0|<\gamma$ implies that $|x(t)-x(t_0)|<\eta$.  
For such $t,k,\eta$ and $\gamma$, the following are true:
\begin{enumerate}
\item $\overline{N_{\delta_k}(t)}\subset \overline{N_{2\eta}(t_0)}$:  This relationship is clear from the choice of $t$.
\item $\overline{N_{\delta_k}(x_k(t))}\subset \overline{N_{3\eta}(x_0)}$:  This fact follows from our assumption that ${\delta_k}<\eta$ and the inequalities
$$|x_k(t)-x(t_0)|\leq|x_k(t)-x(t)|+|x(t)-x(t_0)|\leq\eta+\eta$$
\item $F(\overline{N_{\delta_k}(x_k(t))})\subset\overline{N_\epsilon(F(x_0))}$: This final relationship follows from the prior one and our choice of $\eta$.  
\end{enumerate}\par 
From this third insight we see that
\begin{align*}
\dot{x_k}(t)&\in \overline{N_{\delta_k}(co(F(\overline{N_{\delta_k}(x_k(t))})))}\\
&\subset \overline{N_{\delta_k}(co(\overline{N_\epsilon(F(x_0))}))}\\
&\subset \overline{N_{2\epsilon}(F(x_0))}
\end{align*}
The final inclusion follows from the condition that ${\delta_k}<\epsilon$ and the fact that $F(x_0)$, and hence $\overline{N_\epsilon(F(x_0))}$, are already convex.  By lemma \ref{abscon}, it follows that $\dot{x}(t_0)\in \overline{N_{2\epsilon}(F(x_0))}$, completing the proof.  

\end{proof}
The preceding lemma also gives us the following corollary, which is used in showing that basic differential inclusions give rise to multiflows.
\begin{corollary}
If $F(x)$ satisfies the basic conditions in $G$, then the limit of a uniformly convergent sequence of solutions to the differential inclusion $\dot{x}\in F(x)$ is also a solution. 
\end{corollary}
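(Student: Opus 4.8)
The plan is to recognize this corollary as an immediate special case of Lemma \ref{unisol}. The essential observation is that every exact solution of $\dot{x}\in F(x)$ is automatically a $\delta$-solution for every $\delta>0$, so a uniformly convergent sequence of exact solutions can be reinterpreted as a sequence of $\delta_k$-solutions for any prescribed sequence $\delta_k\to 0$, at which point the preceding lemma applies verbatim.

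First I would verify the inclusion $F(y)\subset F_\delta(y)$ for each $y$ and each $\delta>0$. This follows from a short chain of elementary containments: since $y\in\overline{N_\delta(y)}$ we have $F(y)\subset F(\overline{N_\delta(y)})$; any set is contained in its convex hull, so $F(\overline{N_\delta(y)})\subset co(F(\overline{N_\delta(y)}))$; and any set is contained in its own closed $\delta$-neighbourhood, giving $co(F(\overline{N_\delta(y)}))\subset\overline{N_\delta(co(F(\overline{N_\delta(y)})))}=F_\delta(y)$. Chaining these yields $F(y)\subset F_\delta(y)$ as desired.

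Consequently, if $x(t)$ is an exact solution then $\dot{x}(t)\in F(x(t))\subset F_\delta(x(t))$ almost everywhere, so $x(t)$ is a $\delta$-solution for every $\delta>0$. Given a uniformly convergent sequence $\{x_k\}$ of exact solutions, I would fix any sequence $\delta_k\to 0$ (for instance $\delta_k=1/k$) and regard each $x_k$ as a $\delta_k$-solution. Lemma \ref{unisol} then directly gives that the uniform limit $x(t)$ is a solution of $\dot{x}\in F(x)$. There is no genuine obstacle here; the whole content is the set-theoretic nesting $F\subset F_\delta$, which is precisely what makes Lemma \ref{unisol} strictly stronger than this corollary. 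The only points requiring care are confirming that the definition of $F_\delta$ really does contain $F$, and noting that the basic conditions assumed in the corollary are exactly those needed to invoke the lemma.
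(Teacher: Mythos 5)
Your proposal is correct and matches the paper's intent: the paper states this corollary without proof as an immediate consequence of Lemma \ref{unisol}, and your observation that $F(y)\subset F_\delta(y)$ (so every exact solution is a $\delta_k$-solution for any $\delta_k\to 0$) is precisely the reduction being invoked. The only detail worth keeping in mind is the lemma's standing hypothesis that the limit function remains in $G$, which carries over to the corollary.
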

Now that we have that lemma, we are in a position to prove the main existence result.  The proof of that theorem is very similar to the proof of the classic Cauchy-Peano existence theorem, using a sequence of Euler broken line approximations that limit to the desired solution.  One small alteration that must be made is that when we iteratively define the Euler broken lines at a point $x$, we choose any arbitrary vector in $F(x)$ since we do not have a unique choice $f(x)$.  The fact that these approximate solutions of the differential inclusion converge to an exact solution follows from lemma \ref{unisol}.\par
\begin{theorem} \label{exist} \cite{filippov}
 Let $F$ satisfy the basic conditions in an open domain $G\subset\R^n$.  Then for any $x_0\in G$, there exists a solution of the differential inclusion
$$\dot{x}\in F(x),\hspace{1cm} x(0)=x_0$$
on some interval $[-c_-,c_+]$, where $c_-,c_+>0$.
\end{theorem}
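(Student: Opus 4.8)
The plan is to mimic the classical Cauchy--Peano argument, replacing the single-valued Euler broken lines by broken lines whose slopes are chosen arbitrarily from the set $F$. First I would localize the problem: since $G$ is open and $x_0\in G$, I can fix a radius $r>0$ with $\overline{N_r(x_0)}\subset G$. This closed ball is compact, so Lemma \ref{bounded} supplies a bound $M$ with $|F(x)|\le M$ for every $x$ in the ball. Setting $c_-=c_+=r/M$ (or anything smaller) then guarantees that any curve emanating from $x_0$ with speed at most $M$ remains inside the ball for $|t|\le c_\pm$, which is precisely where $F$ is controlled.

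Next I would construct the approximate solutions. For each $k$, partition $[-c_-,c_+]$ with mesh $h_k\to 0$ and define a broken line $x_k$ by starting at $x_0$ and, on each subinterval $[t_i,t_{i+1}]$, moving with a constant velocity $v_i$ chosen arbitrarily from $F(x_k(t_i))$. Because every slope has norm at most $M$, each $x_k$ is $M$-Lipschitz and stays inside the ball, so the whole family is uniformly bounded and equicontinuous. The key computation is to check that $x_k$ is a $\delta_k$-solution with $\delta_k:=Mh_k$. Indeed, for $t\in[t_i,t_{i+1}]$ we have $|x_k(t)-x_k(t_i)|\le Mh_k=\delta_k$, so $x_k(t_i)\in\overline{N_{\delta_k}(x_k(t))}$ and hence $\dot{x}_k(t)=v_i\in F(x_k(t_i))\subset F(\overline{N_{\delta_k}(x_k(t))})\subset F_{\delta_k}(x_k(t))$ at every $t$ other than the finitely many nodes, i.e.\ almost everywhere.

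With the $\delta_k$-solutions in hand, I would invoke the Arzel\`a--Ascoli theorem: the uniform boundedness and equicontinuity just established yield a subsequence converging uniformly to some continuous $x:[-c_-,c_+]\to\overline{N_r(x_0)}\subset G$. Since $\delta_k\to 0$ and $x(t)\in G$ throughout, Lemma \ref{unisol} applies verbatim and shows that this uniform limit is a genuine solution of $\dot{x}\in F(x)$. Finally, each $x_k$ satisfies $x_k(0)=x_0$, so the limit does as well, furnishing the required solution with initial condition $x_0$ on $[-c_-,c_+]$.

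I expect the main obstacle to be the verification that the Euler broken lines really are $\delta_k$-solutions, that is, matching the arbitrary slope choices against the somewhat intricate definition $F_\delta(y)=\overline{N_\delta(co(F(\overline{N_\delta(y)})))}$, together with the bookkeeping needed to keep every broken line inside the compact ball so that the bound $M$ and Lemma \ref{unisol} remain available. The convex-hull and outer-neighbourhood padding built into $F_\delta$ is generous enough that the estimate goes through cleanly; the only care required is to choose $\delta_k$ large enough to absorb the displacement $Mh_k$ while still sending it to zero, which the choice $\delta_k=Mh_k$ accomplishes.
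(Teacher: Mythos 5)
Your proposal follows essentially the same route as the paper's proof: Euler broken lines with velocities selected arbitrarily from $F$, verification that they are $\delta_k$-solutions, uniform boundedness and equicontinuity feeding into Arzel\`a--Ascoli, and then Lemma \ref{unisol} to identify the uniform limit as a genuine solution. Your choice $\delta_k = Mh_k$ is in fact slightly more careful than the paper's $\delta_k = h_k$, which tacitly requires the spatial inclusion $x_k\bigl(\overline{N_{\delta_k}(t)}\bigr)\subset\overline{N_{\delta_k}(x_k(t))}$ and hence a Lipschitz constant at most $1$; your version verifies the needed containment directly, so the argument is correct.
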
  

\begin{proof}
Without loss of generality, we will demonstrate solution existence on a closed positive interval $[0,c]$.  The existence proof in backwards time is symmetric.\par
Since $G$ is open, we may choose $r$ small enough that the closed ball $\overline{B_r(x_0)}$ is contained in $G$.  Let us denote this ball $Z$.  Next, let $m:=\sup_Z |F(x)|$.  By lemma \ref{bounded}, $m<\infty$.  The length of our interval is $c:=\frac{r}{m}$.  We are now ready to begin to define the sequence of Euler broken lines.\par 
For $k=1,2,\cdots$, define a step size $h_k:=\frac{c}{k}$; clearly, $h_k\to 0$ as $k\to\infty$.  We partition the interval $[0,c]$ into $k$ subintervals.  Let $t_k^i:=ih_k$ for $i=0,1,\cdots,k$.  Note that the superscript here is an index rather than an exponential.  We will define a family of continuous functions $x_k:[0,c]\to Z$ that are linear on the intervals $[t_k^i,t_k^{i+1}]$.\par 
We initiate an iterative process by declaring that $x_k(0)=x_0$.  In order to define $x_k(t)$ for $t\in(t_k^i,t_k^{i+1}]$, first choose any vector $v_k^i\in F(x_k(t_k^i))$.  Again, the superscript here denotes an index.  Then for $t\in(t_k^i,t_k^{i+1}]$, 
$$x_k(t):=x_k(t_k^i)+(t-t_k^i)v_k^i$$
The functions $x_k(t)$ are absolutely continuous since they are continuous and piecewise linear.  Additionally, if we define $\delta_k:=h_k$, then 
$$\dot{x_k}(t)= v_k^i\in F(x_k(t_k^i))\subset F(x_k(\overline{N_{\delta_k}(t)}))\subset F_{\delta_k}(x_k(t)) $$
and so the $x_k$ are $\delta_k$-solutions to the differential inclusion $\dot{x}\in F(x)$.\par 
We also see that $x_k(t)\in Z$ for $t\in[0,c]$ because we make at most $k$ steps of length $h_k=\frac{r}{mk}$ and the maximum velocity is $m$.  More formally, for $t\in(t_k^{l},t_k^{l+1}]$ ($0\leq l< k$), we have the following:
\begin{align*}
|x_k(t)-x_0|&=|\int_0^t \dot{x_k}(s)\,ds|\\
&=|\int_{t_k^l}^{t}\dot{x_k}(s)\,ds+\sum_{i=0}^{l-1}\int_{t_k^i}^{t_k^{i+1}}\dot{x_k}(s)\,ds|\\
&\leq \int_{t_k^l}^t|\dot{x_k}(s)|\,ds+\sum_{i=0}^{l-1}\int_{t_k^i}^{t_k^{i+1}}|\dot{x_k}(s)|\,ds\\
&= \int_{t_k^l}^t|v_k^l|\,ds+\sum_{i=0}^{l-1}\int_{t_k^i}^{t_k^{i+1}}|v_k^i|\,ds\\
&\leq \sum_{i=0}^k\int_{t_k^i}^{t_k^{i+1}}m\,ds\\
&= \sum_{i=0}^k(h_km)\\
&= k(\frac{r}{mk})m\\
&=r
\end{align*} $\,$\par 
Then since the family of functions $\{x_k\}_{k=1}^\infty$ is uniformly bounded (contained in $Z$) and equicontinuous ($|\dot{x_k}(t)|\leq m$), by the Arzela-Ascoli theorem we can choose a uniformly convergent subsequence with a limit $x(t)$.  Since $Z$ is compact, $x(t)\in Z$ for $t\in[0,c]$, and so by lemma \ref{unisol}, the function $x:[0,c]\to G$ is a solution of the inclusion $\dot{x}\in F(x)$.
\end{proof}

The basic existence result is very important for the study of differential inclusions.  Unfortunately, there is no general uniqueness result for basic differential inclusions; many of these systems do, in fact, have multiple solutions for a given initial condition.  However, the solutions of these differential inclusions do behave in other ways that are reminiscent of solutions to standard differential equations.  For starters, any family of solutions on a common time interval is uniformly equicontinuous.

\begin{lemma}\label{equicontinuous} \cite{filippov}
If $F(x)$ satisfies the basic conditions in a closed, bounded domain $D$, all solutions of the differential inclusion $\dot{x}\in F(x)$ are uniformly equicontinuous.
\end{lemma}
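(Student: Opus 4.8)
The plan is to reduce everything to the uniform bound supplied by Lemma \ref{bounded}, which does all the real work; the only substantive idea is that this bound produces a single Lipschitz constant valid for every solution at once, and a common Lipschitz constant is exactly what uniform equicontinuity requires.

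First I would invoke Lemma \ref{bounded}: since $F$ satisfies the basic conditions on the closed, bounded domain $D$, there is some $M\in\R$ with $|F(x)|\leq M$ for all $x\in D$. The crucial point is that $M$ depends only on $F$ and $D$, and not on any particular solution or time interval. Next I would take an arbitrary solution $x(t)$ with values in $D$. By the definition of a solution it is absolutely continuous and satisfies $\dot{x}(t)\in F(x(t))$ almost everywhere; since $x(t)\in D$, this forces $|\dot{x}(t)|\leq M$ almost everywhere. Using the integral representation of an absolutely continuous function, for any two times $t_1,t_2$ in its domain I would estimate
$$|x(t_1)-x(t_2)|=\left|\int_{t_2}^{t_1}\dot{x}(s)\,ds\right|\leq\int_{t_2}^{t_1}|\dot{x}(s)|\,ds\leq M|t_1-t_2|,$$
so that every solution is Lipschitz continuous with the \emph{same} constant $M$.

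Finally I would conclude uniform equicontinuity directly from this uniform Lipschitz bound: given $\epsilon>0$, set $\delta:=\epsilon/M$; then $|t_1-t_2|<\delta$ forces $|x(t_1)-x(t_2)|<\epsilon$ simultaneously for every solution $x$ and every pair of times in its domain, with $\delta$ independent of both the solution and the base point. There is essentially no obstacle in this argument, as the entire difficulty is absorbed into Lemma \ref{bounded}; the only points deserving a moment's care are that each solution is absolutely continuous (so the fundamental-theorem-of-calculus representation is legitimate), that every solution stays in $D$ (so the bound $M$ applies along the whole trajectory), and the conceptual observation that \emph{uniform} equicontinuity demands one $\delta$ working across all solutions and all base points — precisely what a common Lipschitz constant delivers.
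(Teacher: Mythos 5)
Your proposal is correct and follows essentially the same route as the paper: invoke Lemma \ref{bounded} to obtain a uniform bound $M$ on $F$ over $D$, then use the integral representation of an absolutely continuous solution to get the common Lipschitz estimate $|x(t_1)-x(t_2)|\leq M|t_1-t_2|$. Your explicit remark that $M$ is independent of the particular solution, and the final $\delta=\epsilon/M$ step, merely make precise what the paper leaves implicit.
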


\begin{proof}
This result may be seen by considering the definition of a solution.  A solution $x(t)$ has a derivative $\dot{x}(t)\in F(x(t))$ almost everywhere, and it satisfies the Lebesgue integral equation 
$$x(t)=x(0)+\int_0^t \dot{x}(s)ds$$
Using the bound of $F$ in $D$ from lemma \ref{bounded}, we see that solutions are equicontinuous: 
$$|x(s_1)-x(s_2)|=|\int_{s_2}^{s_1} \dot{x}(s)ds|\leq \int_{s_2}^{s_1} |\dot{x}(s)|ds\leq \int_{s_2}^{s_1} M ds= M|s_1-s_2|$$
\end{proof}\par

Using the preceding lemmas and theorems, we can also show that any solution can be continued until it reaches the boundary of a compact domain.  The basic intuition of this claim is clear; if our solution terminates somewhere in the interior of a compact set, we can extend it using the existence theorem.  Below, we state and prove this result more rigorously.

\begin{theorem} \label{solcont} \cite{filippov}
Let the set-valued function $F(x)$ satisfy the basic conditions in a closed, bounded domain $D$.  Then each solution of the differential inclusion $\dot{x}\in F(x)$ lying within $D$ can be continued on both sides up to the boundary of the domain $D$.
\end{theorem}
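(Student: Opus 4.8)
The plan is to extend the given solution to a maximal interval of existence within $D$ and then show that the endpoint of that maximal interval must lie on $\partial D$. Because backward continuation is obtained by reversing time exactly as in the proof of Theorem \ref{exist}, it suffices to treat the forward direction. So I would fix a solution lying in $D$, relabel time so that we attempt to extend it past the right end of its domain, and set up the continuation argument there.

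First I would organize the collection of all solutions that extend the given one and remain in $D$, partially ordered by extension of their domains. Any totally ordered chain of such solutions has an upper bound: the function defined on the union of their domains. This union is again an interval, the function is well defined since the chain members agree on overlaps, and on each compact subinterval it coincides with a single chain member, so it is absolutely continuous there and satisfies $\dot{x}(t)\in F(x(t))$ almost everywhere while staying in $D$. Zorn's lemma then yields a maximal such solution $\hat{x}$ defined on an interval $J$ with right endpoint $\beta$.

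Next I would show that $J$ is closed at $\beta$ with $\hat{x}(\beta)\in D$, and then that $\hat{x}(\beta)\in\partial D$. By Lemma \ref{bounded} there is a bound $M$ with $|F(x)|\le M$ on $D$, so by Lemma \ref{equicontinuous} the map $\hat{x}$ is Lipschitz with constant $M$; hence it is uniformly continuous and $\lim_{t\to\beta^-}\hat{x}(t)$ exists, lying in $D$ since $D$ is closed. Adjoining this limit as $\hat{x}(\beta)$ extends $\hat{x}$ to a solution on the closed interval, since absolute continuity and the almost-everywhere inclusion are unaffected by a single endpoint; by maximality $\beta\in J$. If $\hat{x}(\beta)$ were in the interior of $D$, then since the interior is open and $F$ meets the basic conditions there, Theorem \ref{exist} produces a solution issuing from $\hat{x}(\beta)$ on some $[\beta,\beta+c]$ with $c>0$; concatenating it with $\hat{x}$ gives an absolutely continuous function satisfying the inclusion almost everywhere (the only new candidate failure point is the junction $\beta$, a set of measure zero) which, by continuity, remains in $D$ just past $\beta$. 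This strictly extends $\hat{x}$ within $D$, contradicting maximality, so $\hat{x}(\beta)\in\partial D$ and the solution has been continued up to the boundary.

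The main obstacle I expect is the bookkeeping forced by non-uniqueness: because solutions are not determined by their initial data, there is no single canonical maximal solution to track, which is why I would lean on Zorn's lemma (or, equivalently, take the supremum over admissible extension times and patch together extensions on an increasing family of intervals). The delicate verifications are then that the chain-union and the endpoint-adjunction genuinely produce solutions, and that the concatenation at the junction $\beta$ preserves both absolute continuity and the inclusion $\dot{x}\in F(x)$ almost everywhere; each of these reduces to checking the defining integral representation, but they must be stated carefully rather than taken for granted.
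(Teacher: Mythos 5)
Your argument is essentially correct, but it takes a genuinely different route from the paper. The paper's proof is constructive: it repeatedly invokes Theorem \ref{exist} to extend the solution by explicit time increments $c_i=\epsilon_i/m$ controlled by the current distance to the boundary $\Gamma$, and then shows that either $t_i\to\infty$ or the endpoints $x_i$ converge (via the Lipschitz bound from Lemma \ref{equicontinuous}) to a point of $\Gamma$. You instead run the classical maximal-interval argument, using Zorn's lemma to produce a maximal extension $\hat{x}$ on $J$ with right endpoint $\beta$, closing $J$ at $\beta$ via the Lipschitz estimate, and ruling out $\hat{x}(\beta)\in D^\circ$ by an existence-plus-concatenation contradiction. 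Your instinct that Zorn (or an equivalent careful patching) is genuinely needed here is right: because of non-uniqueness one cannot simply take the supremum of admissible extension times and pass to a limit of incompatible extensions, and your chain-union verification (every compact subinterval of the union lies in a single chain member, since the domains are nested intervals sharing a left endpoint) is sound. What the paper's approach buys is explicitness and the avoidance of any choice principle beyond countably many applications of the existence theorem; what yours buys is brevity and a cleaner logical structure, since the boundary conclusion falls out of maximality rather than from tracking the quantities $\epsilon_i$ and $d(x_i,\Gamma)$.

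One point you should make explicit: the case $\beta=+\infty$. A solution can remain in the interior of $D$ for all forward time (consider a trajectory converging to an interior rest point), so the theorem must be read, as the paper does, as ``the solution either reaches $\partial D$ or is defined for all forward time.'' Your write-up proceeds directly to ``show $J$ is closed at $\beta$,'' which tacitly assumes $\beta<\infty$; you should split into the two cases, noting that when $J$ is unbounded there is nothing further to prove. This is a small omission rather than a flaw in the method, and the rest of your verifications (the endpoint adjunction preserving absolute continuity and the almost-everywhere inclusion, and the concatenation at the single junction point $\beta$) are exactly the delicate steps that need stating, as you correctly anticipated.
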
 

\begin{proof}
From the existence theorem \ref{exist}, we know that at any initial condition there is a solution $x(t)$ to the differential inclusion on some closed interval $[0,c_1]$.  The idea of this proof is to now consider the point $x(c_1)$ and extend the solution from there by again using the existence theorem.  This process is iterated indefinitely, and it either yields a solution defined for all time (meaning the solution remains in the interior of $D$ for all forwards time) or the position of the solution at the endpoints limits to the boundary of $D$.  The process in backwards time is symmetric.\par 
Now, more formally, take an arbitrary $x_0\in D$.  There is some $\epsilon_1$ such that $\overline{B_{\epsilon_1}(x_0)}$ is contained in the interior of $D$.  Following the method of the proof of the existence theorem \ref{exist}, there is a solution $x:[0,c_1]\to \overline{B_{\epsilon_1}(x_0)}$ where $c_1=\frac{\epsilon_1}{m}$ and $m=\sup_D|F(x)|$ ($m<\infty$ by lemma \ref{bounded}).  Denoting the boundary of $D$ by $\Gamma$, if $d(x(c_1),\Gamma)> \epsilon_1$ then we can extend the solution to a further interval of length $c_1$.  We either repeat this process indefinitely (giving us a solution which remains in $D$ for all forwards time) or until $d(x(k c_1),\Gamma)\leq \epsilon_1$ for some $k$.  In the latter case, let $t_1=k c_1$ and  $x_1=x(t_1)$.\par 
Choosing $\epsilon_2<d(x_1,\Gamma)<\epsilon_1$, we repeat this process, letting $t_2=t_1+ j c_2$ and $x_2=x(t_2)$ if we reach a point that $d(x(t_1+j c_2),\Gamma)\leq \epsilon_2$.  In fact, we may iterate this process either until the algorithm yields a solution remaining in the interior of $D$ for all time at some $i^{th}$ step or we get sequences 
$$t_1<t_2<\cdots, \hspace{1cm} x_1,x_2,\cdots$$\par 
If $t_i\to\infty$ as $i\to\infty$, then the solution $x$ remains in $D$ for all forwards time.  Otherwise, there exists some $T$ such that $t_i<T$ for all $i$.  Thus $\{t_i\}$ is a bounded, monotonic sequence, and hence limits to some $t^*$.  This bound implies that $\epsilon_i\to 0$ since $c_i=\frac{\epsilon_i}{m}$.  We also see that the $x_i$ converge to some $x^*$ because by the equicontinuity of solutions (lemma \ref{equicontinuous}), $|x(t_i)-x(t_j)|\leq m|t_i-t_j|$.  Letting $x(t^*)=x^*$, we obtain a solution $x:[0,t^*]\to D$ which reaches the boundary of $D$.  
\end{proof}

These theorems provide us with the initial structure necessary to begin some analysis of differential inclusions.  However, we still lack anything like a flow for these systems.  This omission is very unfortunate, because a lot of information can be gleaned from topological information on flows.  Without them (or something like them) we cannot have the concept of Omega limit sets, and something like Conley Index analysis is impossible.\par 
Thus, correcting this omission is highly desirable.  Since differential inclusions do not necessarily have unique solutions for a given initial condition, they cannot be expected to have a flow associated with them. But perhaps we can define an object that allows us to perform similar analysis, some single object that describes the entire collection of possible solutions to a differential inclusion at once. Richard McGehee has proposed such an object, which he calls a \textit{multiflow} \cite{mcgehee}.  

\section{Multiflows}$\,$\par 

\subsection{Defining Multiflows}

Before we can define multiflows we need some background information.\par 

\begin{definition} If $X$ is a set, then a \textbf{relation} on $X$ is any subset of $X\times X$.
\end{definition}
\begin{definition} The \textbf{composition of two relations} $F$ and $G$ on a set $X$ is the relation
$$F\circ G=\{(x,z)\in X\times X:\exists y\in X \text{ such that } (x,y)\in G,(y,z)\in F\}$$
\end{definition} 
We also need to introduce some notation.  Let $\Phi\subset[0,\infty)\times X\times X$; we write 
$$\Phi^t=\{(x,y):(t,x,y)\in\Phi\}$$
That is, for each $t\geq 0$, $\Phi^t$ defines a relation on $X$.  With these concepts in mind, we can now define multiflows.\\
\begin{definition} Let $X$ be a compact metric space.  A \textbf{multiflow} on $X$ is a closed subset of $[0,\infty)\times X\times X$ satisfying the two monoid properties:
\begin{enumerate}
\item $\Phi^0=\{(x,x)\in X\times X\}$
\item $\Phi^{t+s}=\Phi^t\circ\Phi^s$ for all $t,s\geq 0$.  
\end{enumerate}
\end{definition}
The concept of a multiflow arises from considering the graph of a flow and the closed graph theorem.  The closed graph theorem tells us that a function is continuous if and only if its graph is closed.  Then if we identify the flow $\varphi$ with its graph $\phi\subset\R\times X\times X$ 
we see that the definitions of a flow and a multiflow are almost identical.  Indeed, the restriction of any flow to forward time is automatically a multiflow.  There are only two differences between the objects, and both of these differences are motivated by necessities of differential inclusions.\par
The first difference is that a multiflow only considers forward time, making it more closely akin to a semiflow  than a complete flow.  The possibility of intersecting trajectories necessitates this difference.  With differential inclusions, it is possible for solutions that begin at distinct initial conditions to reach the same point in finite time.  Therefore a solution could move forwards for time $t$ to one location, then backwards for time $-t$ to a location other than the initial condition.  This makes retaining the group action of $\R$ impossible because the identity requirement would not hold in general. Therefore we only examine solutions in forward time and settle for a monoid action.  Note, however, that it is still quite straightforward to examine backwards time behaviour as a separate system.\par 
 \begin{figure}[h]
  \centering
  \subfloat{\includegraphics[width=0.4\textwidth]{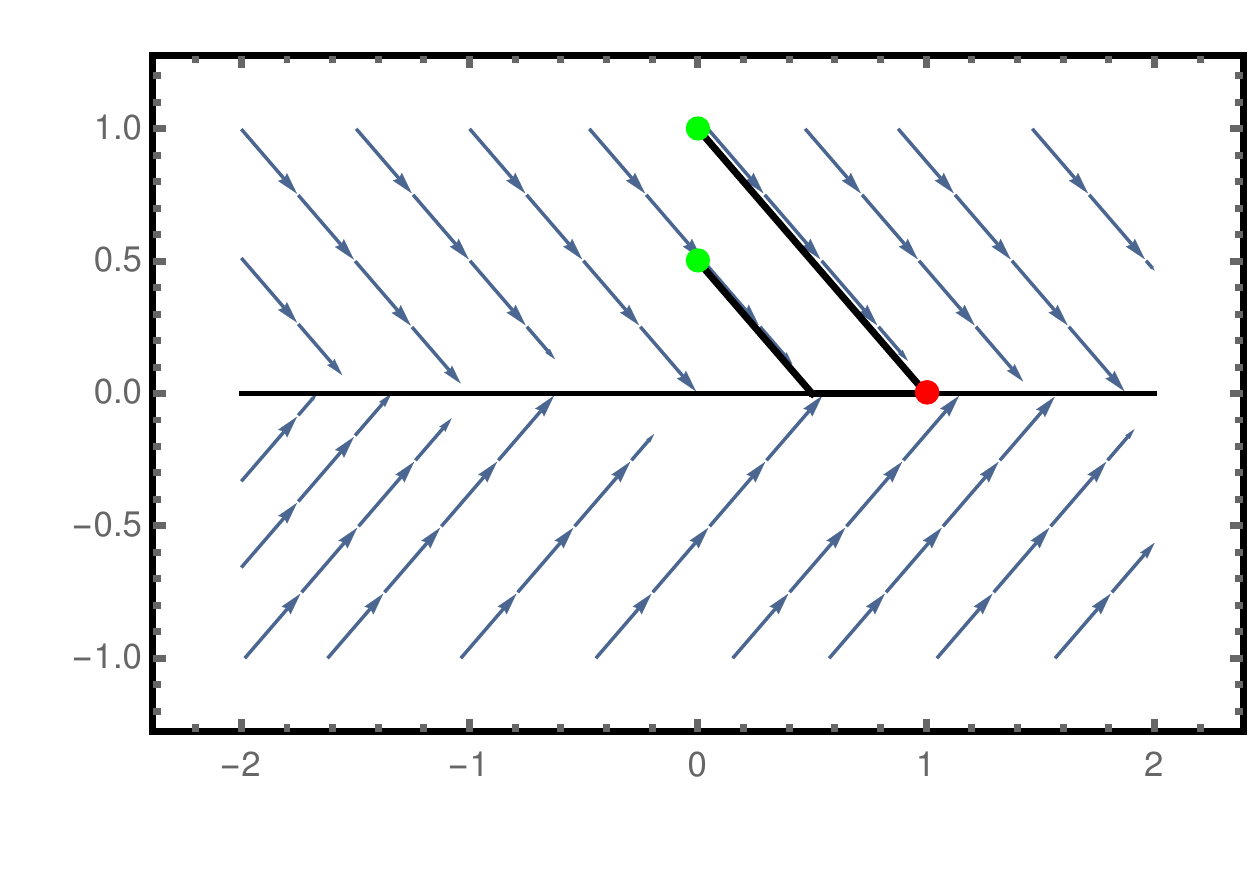}\label{fig:f1}}
  
  \caption{In this simple Filippov system, two different initial conditions reach the same point at time $t=1$.  This collision means that any analogue of a flow for Filippov systems cannot have a group action: $\varphi^1\circ\varphi^{-1}$ would not be the identity.}
\end{figure}
The other difference is that for a flow $\varphi$, each $x\in X$ has a unique $\varphi_t(x)$ such that $(x,\varphi_t(x))\in \phi_t$.  As we have discussed at length, such a condition cannot possibly hold when we consider differential inclusions.  Therefore the multiflow is the object which retains all of the structure of a flow that we cannot immediately rule out as applying to differential inclusions.\par 
Of course, we have also defined multiflows only over compact spaces $X$, and not all topological spaces.  This difference, however, is not fundamental, and is chiefly designed for analytic convenience.  This field of study is very new, raw, and abstract, so adding the assumption of compactness gives an extremely helpful simplification.  Once multiflows over compact spaces have been more thoroughly studied, if they turn out to be useful, further research could delve into what happens without this assumption.\par 

\subsection{Differential Inclusions as Multiflows}$\,$\par

We now present the main result of this paper: that any differential inclusion which satisfies the basic conditions gives rise to a multiflow.  Let $\dot{x}\in F(x)$ be any Filippov system defined on an open domain $G\subset\R^n$, and let $K$ be any non-empty, connected, compact subset of $G$. We are interested in considering all solutions of the differential inclusion which are contained entirely in $K$.  If a solution begins in $K$, but then leaves, we only monitor the solution up until the time it leaves.  We want to show that the union of the graphs of all of these solutions forms a multiflow.\par 
More explicitly, define the object $\Phi$ to be the set of all points $\{(T,a,b)\in \R^+\times K\times K\}$ such that there exists a solution $x(t):[0,T]\to K$ to the differential inclusion $\dot{x}\in F(X)$ with $x(0)=a$ and $x(T)=b$.  
\begin{theorem}
The set $\Phi$ is a multiflow over $K$.
\end{theorem}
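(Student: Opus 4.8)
The plan is to verify the two defining properties of a multiflow—the monoid identity at $t=0$ and the composition law $\Phi^{t+s}=\Phi^t\circ\Phi^s$—and, crucially, to show that $\Phi$ is a \emph{closed} subset of $[0,\infty)\times K\times K$. The first property is immediate: a solution $x:[0,0]\to K$ with $x(0)=a$ forces $x(0)=b$, so $\Phi^0=\{(a,a):a\in K\}$ exactly as required. The composition law splits into two inclusions. For $\Phi^t\circ\Phi^s\subset\Phi^{t+s}$, I would take $(a,b)\in\Phi^t\circ\Phi^s$, so there is an intermediate point $c$ with a solution $x_1:[0,s]\to K$ from $a$ to $c$ and a solution $x_2:[0,t]\to K$ from $c$ to $b$. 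Concatenating these into a single function on $[0,s+t]$ (shifting $x_2$ by $s$) yields an absolutely continuous function whose derivative lies in $F$ almost everywhere on each piece, hence almost everywhere on the whole interval; since both pieces stay in $K$, the concatenation does too, giving a point of $\Phi^{t+s}$. The reverse inclusion $\Phi^{t+s}\subset\Phi^t\circ\Phi^s$ is proved by \emph{restriction}: given a solution $x:[0,s+t]\to K$ from $a$ to $b$, its restrictions to $[0,s]$ and $[s,s+t]$ (the latter reparametrized to start at $0$) are themselves solutions staying in $K$, and they witness membership in the composition with intermediate point $x(s)$.

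The substantive part—and the main obstacle—is showing that $\Phi$ is closed. I would take a sequence $(T_k,a_k,b_k)\in\Phi$ converging to $(T,a,b)$ and produce a solution from $a$ to $b$ over time $T$ staying in $K$. Each $(T_k,a_k,b_k)$ comes with a solution $x_k:[0,T_k]\to K$. The difficulty is that the $x_k$ have varying domains $[0,T_k]$, so I cannot directly invoke the convergence lemma. The remedy is to reparametrize all solutions onto a common interval, say $[0,1]$, via $y_k(\tau):=x_k(T_k\tau)$, or alternatively to extend each $x_k$ to a fixed interval $[0,T+1]$ (for large $k$) by freezing it at its endpoint value past $T_k$. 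Because $K$ is compact and $F$ satisfies the basic conditions, Lemma \ref{bounded} gives a uniform velocity bound $M$ on $K$, so by Lemma \ref{equicontinuous} the family $\{x_k\}$ is uniformly equicontinuous and uniformly bounded (inside the compact $K$). The Arzelà--Ascoli theorem then furnishes a uniformly convergent subsequence whose limit $x$ stays in $K$ (as $K$ is closed).

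The final step is to argue that this limit $x$ is genuinely a solution of $\dot{x}\in F(x)$ on $[0,T]$ from $a$ to $b$. Here the Corollary to Lemma \ref{unisol}—that a uniform limit of solutions is again a solution—does the decisive work; I must only ensure the reparametrization or truncation is handled so that the limiting object is a solution on the correct interval $[0,T]$ and that the endpoint values converge, namely $x(0)=\lim a_k=a$ and $x(T)=\lim b_k=b$ (the latter using uniform convergence together with $T_k\to T$ and the equicontinuity bound to control $|x_k(T_k)-x_k(T)|$). The care needed in reconciling the varying domains $T_k$ with the common limiting domain is precisely where the proof must be most attentive: one should verify that the convergence $T_k\to T$ does not spoil the endpoint identification, which follows because $|x_k(T_k)-x(T)|\le|x_k(T_k)-x_k(T)|+|x_k(T)-x(T)|\le M|T_k-T|+\|x_k-x\|_\infty\to 0$. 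Once membership in $\Phi$ is confirmed, $\Phi$ is closed, and together with the two monoid properties this establishes that $\Phi$ is a multiflow over $K$.
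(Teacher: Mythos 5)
Your overall architecture matches the paper's: the monoid properties are handled by concatenation and restriction of solutions exactly as in the paper, and the real content is the closedness of $\Phi$, attacked via a uniform velocity bound, equicontinuity, Arzel\`a--Ascoli, and the fact that uniform limits of solutions are solutions. The endpoint identification $x(T)=b$ via $|x_k(T_k)-x(T)|\le M|T_k-T|+\|x_k-x\|_\infty$ is also the paper's computation. Where you diverge is in how you put the solutions $x_k:[0,T_k]\to K$ onto a common domain, and this is precisely where your proposal has a gap.

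Neither of your two proposed remedies lets you invoke the corollary to Lemma \ref{unisol} as stated. If you freeze $x_k$ at its endpoint value past $T_k$, the extended function has derivative $0$ on $(T_k,T]$, and $0$ need not belong to $F(x_k(T_k))$ (nor even to $F_{\delta}(x_k(T_k))$), so the extended functions are not solutions --- and not $\delta_k$-solutions either --- on $[0,T]$; the corollary does not apply to them. If instead you reparametrize via $y_k(\tau)=x_k(T_k\tau)$, then $\dot y_k(\tau)\in T_kF(y_k(\tau))$, so each $y_k$ solves a \emph{different} inclusion depending on $k$; again the corollary, which concerns a fixed $F$, does not apply directly (this route can be salvaged by showing the $y_k$ are $\delta_k$-solutions of $\dot y\in TF(y)$ with $\delta_k\to0$ and then using Lemma \ref{unisol} itself, but that argument is absent, and the case $T=0$ must be treated separately). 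The paper avoids both problems by genuinely extending each $x_k$ as a solution: it chooses a compact neighbourhood $N$ with $K\subset N^0\subset N\subset G$, uses the continuation theorem (Theorem \ref{solcont}) together with the bound $M$ to show that for large $n$ each $x_n$ extends as an honest solution to all of $[0,T]$ with values in $N$, applies Arzel\`a--Ascoli and Lemma \ref{unisol} there, and then runs a separate argument to show the limit actually stays in $K$ (a step your freezing approach would render automatic, but which the extension approach requires). To repair your proof you should either adopt this extension-into-$N$ strategy, or supply the missing bridge for one of your constructions --- e.g.\ show the limit is a solution on $[0,T-\epsilon]$ for every $\epsilon>0$ by restricting to subintervals where the frozen functions are genuine solutions, and then pass to the closed interval using the Lipschitz bound and the fact that $\{T\}$ has measure zero.
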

\begin{proof}
The monoid properties are relatively trivial to see, although it does take a bit of space to write their proof. We see that $\Phi^0=\{(a,a)\in K\times K\}$ because by theorem \ref{exist}, for each $a\in K$, there is at least one solution, and obviously a solution cannot begin at $a$ and go to any other point in zero time.  \par 
Next, note that the second monoid property, $\Phi^{t+s}=\Phi^t\circ\Phi^s$ for all $t,s\geq 0$, is equivalent to the following statement: $(t+s,a,c)\in\Phi$ if and only if there exists $b\in K$ such that $(t,a,b)\in\Phi$ and $(s,b,c)\in\Phi$.  In our case, points in $\Phi$ may be written as $(t,x(0),x(t))$.  Then we must show two things.  First, if there is a solution $z$ such that $z(0)=a$ and $z(t+s)=c$, then there must be some point $b$ and solutions $x$ and $y$ such that $x(0)=a$, $x(t)=b=y(0)$, and $y(s)=c$.  Conversely, if there is some point $b$ and solutions $x$ and $y$ such that $x(0)=a$, $x(t)=b=y(0)$, and $y(s)=c$, then there must be a solution $z$ such that $z(0)=a$ and $z(t+s)=c$.\par 
Let us first assume that there is some point $b$ and solutions $x$ and $y$ such that $x(0)=a$, $x(t)=b=y(0)$, and $y(s)=c$.  In brief, pasting these solutions together yields the desired solution.  More rigorously, define the function $z:[0,t+s]\to K$ by the equation
$$z(r) = 
\begin{cases}
x(r) & r\leq t\\
y(r-t) & r \geq t
\end{cases}$$
It is clear that $z$ is absolutely continuous since both $x$ and $y$ are, and by its definition it is obvious that $z(0)=a$ and $z(t+s)=c$.  It also satisfies the differential equation $\dot{z}\in F(z)$ almost everywhere because for almost all $r\in[0,t]$,
$$\frac{d}{dr}z(r)=\frac{d}{dr}x(r)\in F(x(r))=F(z(r))$$
and for almost all $r\in[t,t+z]$
$$\frac{d}{dr}z(r)=\frac{d}{dr}y(r-t)\in F(y(r-t))=F(z(r))$$
Thus $z$ is a solution to the differential inclusion.\par 
Now assume that there is a solution $z$ such that $z(0)=a$ and $z(t+s)=c$. In brief, splitting this function into two functions at time $t$ yields the desired solutions.  More rigorously, define the functions $x:[0,t]\to K$ and $y:[0,s]\to K$ by the equations
$$ x(r) = z(r)\hspace{1cm} y(r) = z(r+t)$$
Again, it is clear that $x$ and $y$ are absolutely continuous functions which evaluate to the desired points at the appropriate times.  Additionally, 
$$\frac{d}{dr}x(r)=\frac{d}{dr}z(r)\in F(z(r))=F(x(r))$$
almost everywhere and 
$$\frac{d}{dr}y(r)=\frac{d}{dr}z(r+t)\in F(z(r+t))=F(y(r))$$
almost everywhere.  Therefore $x$ and $y$ are the desired solutions to the differential inclusion, and $\Phi$ satisfies the monoid properties.\par 
The difficultly of this proof comes from showing that $\Phi$ is closed.  Luckily, Filippov's theorems do much of the hard work for us.  Let $(T,a,b)$ be a limit point of $\Phi$; we will show that  $(T,a,b)\in\Phi$.\par 
Since $(T,a,b)$ is a limit point of $\Phi$, there is some sequence of points in $\Phi$
$$(T_n,x_n(0),x_n(T_n))\to(T,a,b)$$
where each $x_n(t)\in F(x_n(t))$ for almost all $t$ in the interval $[0,T_n]$.\par
The basic idea of the proof is to find a subsequence of $\{x_n\}$ which exist on (or can be extended to) the common interval $[0,T]$.  Then we apply the Arzela-Ascoli theorem to this family of solutions in order to get a uniformly convergent subsequence.  By theorem \ref{unisol}, this subsequence converges to a solution $x^*(t)$; the proof will then be complete once we show that $x^*(0)=a$, $x^*(T)=b$, and $x^*(t)\in K$ $\forall t\in[0,T]$.\par 
We begin by taking a compact neighbourhood $N$ of $K$; that is, $N$ is a compact set satisfying\footnote{We use the notation $N^0$ to denote the interior of $N$} $$K\subset N^0\subset N\subset G$$\par 
By lemma \ref{bounded} there is some constant $M$ such that $|F(x)|\leq M$ on $N$. Combining that result with lemma \ref{equicontinuous}, any family of solutions $\{x:[0,T]\to N\}$ is equicontinuous and 
$$|x(s_1)-x(s_2)|\leq M|s_1-s_2|$$\par 
At this point, however, the sequence of solutions we are considering are not necessarily defined on the common interval of time $[0,T]$, and in order to get equicontinuity and apply Arzela-Ascoli, we need them to be.  If $T_n>T$ then this presents no obstacle, as we simply consider $x_n|_{[0,T]}$.  However, we must show that for sufficiently large $n$ we can extend $x_n$ to the interval $[0,T]$ even if $T_n<T$.\par 
By theorem \ref{solcont}, we know that any solution can be extended at least until it reaches the boundary of $N$.  Since $K\subset N^0$, we can extend any $x_n$ to be defined on some interval $[0,T_n']$, where $T_n'>T_n$.  Let $\delta:=d(K,\overline{N})>0$, and choose an $n_0$ such that $n\geq n_0$ implies that $|T_n-T|<\frac{\delta}{M}$.  Then for such $n$, if $T_n<T_n'<T$ we get that
$$|x_n(T_n)-x_n(T_n')|<M|T_n-T_n'|<\delta$$
and so $x_n(T_n')\in N^0$, and hence may be continued further.  Thus we may assume that for $n\geq n_0$, the solution $x_n$ may be extended to the interval $[0,T]$.\par 
We are now in a position to apply Arzela-Ascoli; the solutions are clearly uniformly bounded (they are all contained in the compact set $N$) and we know that they are equicontinuous.  Thus, there is a convergent subsequence of $\{x_n:[0,T]\to N\}$.  By theorem \ref{unisol}, this subsequence converges to a solution of the differential inclusion; let us call this solution $x^*(t)$.  Since $x_n(0)\to a$ by definition, it is clear that $x^*(0)=a$.  Then to show that $(T,a,b)\in\Phi$, we just need to show that $x^*(T)=b$ and that $x^*(t)\in K$ $\forall t\in[0,T]$.\par 
To show that $x^*(T)=b$, we will show that for any $\varepsilon$, $|x_n(T)-b|<\varepsilon$ for sufficiently large $n$.
\begin{align*}
|x_n(T)-b|&=|x_n(T)-x_n(T_n)+x_n(T_n)-b|\\
&\leq |x_n(T)-x_n(T_n)|+|x_n(T_n)-b|\\
&\leq M|T-T_n|+|x_n(T_n)-b|
\end{align*}
Since $T_n\to T$ and $x_n(T_n)\to b$ by assumption, we can guarantee that $|T-T_n|<\frac{\varepsilon}{2M}$ and $|x_n(T_n)-b|<\varepsilon/2$ for sufficiently large $n$, and so $x^*(T)=b$.\par 
Now, for sake of contradiction, assume that $x^*(t)\in N\setminus K$ for some $t\in[0,T]$.  
Let $\tau:= T-t$.  For sufficiently large $n$, $|T-T_n| < \tau$.  Then for these large $n$, $x_n(t)\in K$, since we have chosen $n$ large enough to guarantee that $t \in [0,T_n]$ and $x_n:[0,T_n]\to K$ by definition.  Then $x_n(t)$ must limit to a point in $K$ as $n \to\infty$ since $K$ is compact.  Thus we have a contradiction, and we see that $x^*:[0,T]\to K$.\par 
Thus $(T,x^*(0),x^*(T))=(T,a,b)\in\Phi$, and so $\Phi$ is a multiflow.  
\end{proof}

\subsection{Concepts Related to Multiflows}$\,$\par

Several other researchers have attempted to generalize the concept of a flow to differential inclusions, and their goals are often similar to the goals of multiflows.  The oldest attempt we can find in this direction came from Roxin \cite{roxin}, who developed a generalized dynamical system in order to study general control systems.  Later, Ball \cite{ball} defined a generalized semiflow, and Melnik and Valero \cite{melnik} then defined a multi-valued semiflow.  Recently, Oyama described a set-valued dynamical system \cite{oyama}.  Each definition is distinct in general, but they share many thematic similarities.  We will briefly examine the definition given byOyama and compare it to the concept of multiflows.\par
The obeject described in\cite{oyama} is very similar to multiflows.  Oyama calls the set-valued map $\Phi:[0,\infty)\times X\to X$ on a compact subset $X\subset\R^n$ a  \textbf{\textit{set-valued dynamical system}} if it meets the following conditions:
\begin{enumerate}
\item $\Phi_t(x)$ is nonempty for all $t,x$
\item $\Phi_0(x)=x$
\item $\Phi_t(\Phi_s(x))=\Phi_{t+s}(x)$
\item $\Phi$ is compact valued and upper-semicontinouous.  
\end{enumerate} \par 
Conditions $(2)$ and $(3)$ are equivalent to the monoid conditions of multiflows.  Here, condition $(4)$ is actually equivalent to the closure condition of multiflows since the space $X$ is compact.  But condition $(1)$ is much stronger than in multiflows, and it requires that solutions of the differential inclusion remain in a compact subset for all time.  As we have noted several times in this paper, that condition will not generally be met by differential inclusions, or even the prominent example of Filippov systems.  In fact, the largest difficulty in proving that differential inclusions give rise to multiflows was accounting for the solutions not all continuing for all time.  This distinction means that multiflows can describe a much larger class of differential inclusions than existing systems.  \par 
The fact that multiflows allow us to study dynamical systems without worrying about whether or not solutions exist for all time is one of its most important features, and is made possible by the shift in perspective from maps to closed sets.  Any time $t$ relation $\Phi^t$ is allowed to be the empty set, and so we do not need to make the demands that set-valued maps do.  This feature also helps deal with the complication of finite time blowup from ordinary ODEs (think of the simple ODE $\dot{x}=x^2$ and finite time blowup), and so we avoid any complications like local flows.  In this way, multiflows can be used to describe a very broad class of dynamical systems.\par 
As it stands now, multiflows do not have many practical applications.  However, we hope to take many of the ideas present in existing frameworks and adapt them to fit multiflows.  In this way we hope to develop a system that can help us understand a wide range of potentially non-unique dynamical systems, including, of course, Filippov systems.\par

\section{Conclusions and Future Work} $\,$ \par
Filippov systems, and differenial inclusions in general, are becoming more and more popular in scientific modelling.    Although these systems have many undersirable features--a lack of uniqueness chief among them--they are deeply immeshed in the scientific community.  Since these models seem to be here to stay, it would be nice to have a robust framework to analyze them with.\par 
We have seen throughout this paper that differential inclusions cannot give rise to flows in general, which removes a valuable tool from our mathematical arsenal.  However, we have also seen that multiflows retain as many of the features of flows as possible, and that basic differential inclusions give rise to multiflows.  What remains to be seen is whether or not framing differenial inclusions as multiflows actually provides any useful information.  Hopefully, multiflows will allow us to define generalizations of concepts like $\omega$-limit sets, chain recurrence, and Conley Index theory that are suitable for these systems.  One immediate question that needs to be answered is whether isolating neighborhoods and attractors can be robustly defined for multiflows; if so, then multiflows will likely be a very useful tool.
\par  
Many open questions remain surrounding differential inclusions.  Their behaviour can be extremely bizzarre.  However, Filippov showed that these systems do have some familiar properties. Solutions exist.  Solutions are bounded and equicontinuous in compact domains, and they continue until they reach the edge of the  domain.  And the uniform limit of a sequence of solutions is a solution.  With all of these theorems we can see that individual solutions to Filippov systems behave a lot like solutions to typical differential equations.  Examining the entire set of solutions to a given Filippov system is more difficult, and understanding their behavior under perturbation is even harder.  But in the future, we hope that multiflows will give us more tools to analyze these complicated dynamical systems.


\newpage

\end{document}